\newcommand{\newnumbered}[2]{\newtheorem{#1}[theorem]{#2}}
\newcommand{\newunnumbered}[2]{\newtheorem{#1}[theorem]{#2}}
\newcommand{\Title}[2]{\title{#1}\newcommand{\Acknowledgements}{\section*{Acknowledgements} #2}}
\newcommand{\Author}[2][]{\author{#2}}
\newcommand{\Comma}{\and}
\newcommand{\Und}{\and}
\newcommand{\br}{, }
\newcommand{\fs}{. }
\newcommand{\thanksone}[3][]{#1\thanks{#3\email{\tt #2}}}
\newcommand{\thankstwo}[3][]{#1\thanks{#3\email{\tt #2}}}
\newcommand{\thanksthree}[4][]{#1\thanks{#3} \thanks{#4\email{\tt #2}}}
\newcommand{\email}[1]{#1}
\newcommand{\classno}[2][2000]{}
\newcommand{\printscl}{}
\newcommand{\mktitle}{\maketitle}
\newcommand{\mkabstitle}{}
\newcommand{\bqed}{}
\newtheorem{theorem}{Theorem}[section] % 1st argument is your name for it
\newtheorem{lemma}[theorem]{Lemma}     % 2nd argument is what is printed
\newtheorem{corollary}[theorem]{Corollary}
\numberwithin{equation}{section}
\newcommand{\DOI}[1]{\href{http://dx.doi.org/#1}{\texttt{doi:#1}}}
\newcommand{\noop}[1]{}
\newcommand{\mm}{\ensuremath{\!-\!}}
\newcommand{\pp}{\ensuremath{\!+\!}}
\newcommand{\OA}{\ensuremath{\operatorname{OA}}}
\newcommand{\sV}[2]{{
	\setlength{\arraycolsep}{2pt}
	\renewcommand{\arraystretch}{0.8}
	\left[\begin{array}{ccc} #1 \\ #2 \end{array}\right]
}}
\begin{document}

\Title{On tight $4$-designs in Hamming association schemes}{%
Alexander Gavrilyuk is supported by
BK21plus Center for Math Research and Education at Pusan National University,
and by Basic Science Research Program through the National Research Foundation of Korea (NRF) funded
by the Ministry of Education (grant number NRF-2018R1D1A1B07047427).
Sho Suda is supported by JSPS KAKENHI Grant Number 18K03395.
Jano\v{s} Vidali is supported by the Slovenian Research Agency
(research program P1-0285 and project J1-8130).
}

\Author[Alexander Gavrilyuk, Sho Suda and Jano\v{s} Vidali]{%
\thanksone[Alexander Gavrilyuk]{alexander.gavriliouk@gmail.com}{%
Center for Math Research and Education\br
Pusan National University\br
2, Busandaehak-ro 63beon-gil\br
Geumjeong-gu, Busan, 46241\br
Republic of Korea\fs
}%
\Comma
\thankstwo[Sho Suda]{suda@auecc.aichi-edu.ac.jp}{%
Department of Mathematics Education\br
Aichi University of Education\br
1 Hirosawa, Igaya-cho, Kariya\br
Aichi, 448-8542\br
Japan\fs
}
\Und
\thanksthree[Jano\v{s} Vidali]{janos.vidali@fmf.uni-lj.si}{%
Faculty of Mathematics and Physics\br
University of Ljubljana\br
Jadranska ulica 21\br
1000 Ljubljana\br
Slovenia\fs
}{%
Institute of Mathematics, Physics and Mechanics\br
Jadranska ulica 19\br
1000 Ljubljana\br
Slovenia\fs
}
}

\classno{05E30 (primary), 05B15 (secondary)}

\date{\today}

\mktitle

\begin{abstract}
We complete the classification of tight $4$-designs in Hamming association schemes $H(n,q)$, i.e.,
that of tight orthogonal arrays of strength $4$, which had been open since a result by Noda (1979).
To do so, we construct an association scheme attached to a tight $4$-design in $H(n,q)$
and analyze its triple intersection numbers to conclude the non-existence in all open cases.
\printscl
\end{abstract}

\mkabstitle

%%%%%%%%%%%%%%%%%%%%%%%%%%%%%%%%%%%%%%%%%%%%%%%%%%%%%%%%%%%%%%%%%%%%%%%%
\section{Introduction}
An {\it orthogonal array} with parameters $(N,n,q,t)$
($\OA(N,n,q,t)$ for short)
is an $N\times n$ matrix with entries from the alphabet $\{1,2,\ldots,q\}$
such that in any its $t$ columns,
all possible row vectors of length $t$ occur equally often.
Since orthogonal arrays were introduced by Rao~\cite{R} in 1947,
they became one of the central topics in combinatorics
and found many applications in related areas
such as coding theory, cryptography, etc., see~\cite{HSS}.

In the theory of orthogonal arrays, a fundamental problem is constructing orthogonal arrays
with extremal parameters. In particular, given the strength $t$, the alphabet size $q$, and the number of columns $n$,
we are interested in orthogonal arrays with minimum possible number of rows $N$.
For $t=2e$, the lower bound on $N$ was given by Rao~\cite{R} as
\begin{align}\label{eq:tight}
N\geq \sum_{k=0}^e \binom{n}{k}(q-1)^k.
\end{align}
An orthogonal array is said to be {\it complete} or {\it tight}
if it achieves equality in this bound.

The rows of an orthogonal array $\OA(N,n,q,t)$
can be naturally considered as a subset of points
of the Hamming association scheme $H(n,q)$,
which form a {\em $t$-design}, a design of strength $t$
(we refer the reader to Section~\ref{sect:Definitions}
for the precise definitions),
and then the problem of constructing (tight) orthogonal arrays
can be treated in the broader context
of (tight) {\em designs} in association schemes~\cite{BBI}.
Informally speaking, design theory is devoted to finding subsets
that represent a good approximation of the whole space
such as, for example, a Hamming association scheme $H(n,q)$,
a Johnson association scheme $J(v,k)$,
or a real unit sphere $S^{d-1}$.
The design theories for these spaces
have been studied separately as {\em orthogonal arrays},
as {\em block designs}, and as {\em spherical designs}, respectively.

%A fundamental research on designs is to determine the minimum cardinality
%of a design with given strength $t$, i.e., a $t$-design.
The lower bounds on $t$-designs were provided for $H(n,q)$ by Rao~\cite{R},
for $J(v,k)$ by  Ray-Chaudhuri and Wilson~\cite{RW} (see also~\cite{WR}),
and for $S^{d-1}$ by  Delsarte, Goethals and Seidel~\cite{DGS}.
We call a design {\em tight} if it achieves the corresponding lower bound.

Tight $t$-designs with large strength $t$
seem very rare in general~\cite{B, BD1979, BD1980, X},
while the classification problem of tight designs of small strength may lead
to fundamental problems in combinatorics: for example, any symmetric block design (and so a projective plane of order $k$)
is tight in the Johnson association scheme $J(v,k+1)$, and a Hadamard matrix of order $n+1$ gives rise to
a tight $2$-design in $H(n,2)$ (see Theorem \ref{thm:tightinH}).

Regarding tight designs of strength $4$,
Noda~\cite{N1979} showed the following in 1979.
\begin{theorem}\label{thm:coa}
Let $C$ be a tight $4$-design in a Hamming association scheme $H(n,q)$.
Then one of the following holds:
\begin{enumerate}
\item $(|C|,n,q)=(16,5,2)$,
\item $(|C|,n,q)=(243,11,3)$,
\item $(|C|,n,q)=(9a^2(9a^2-1)/2,(9a^2+1)/5,6)$,
where $a$ is a positive integer such that $a\equiv 0\pmod{3}$,
$a\equiv \pm1\pmod{5}$ and $a\equiv 5\pmod{16}$. \bqed
\end{enumerate}
\end{theorem}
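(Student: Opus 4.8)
The plan is to translate the two hypotheses on $C$ — that it is a $4$-design and that it is tight — into a rigid system of numerical conditions on $(|C|,n,q)$, and then to resolve the resulting Diophantine problem. Concretely, I would first bring in Delsarte's linear-programming framework. Let $a=(a_0,\dots,a_n)$ be the inner distribution of $C$, so that $a_i|C|$ counts the ordered pairs of codewords at Hamming distance $i$. Being a $4$-design means $\sum_{i=0}^n a_iK_k(i)=0$ for $k=1,2,3,4$, where $K_k$ is the Krawtchouk polynomial of $H(n,q)$; tightness (equality in \eqref{eq:tight} with $e=2$) forces $|C|=1+n(q-1)+\binom{n}{2}(q-1)^2$ and the standard fact that $a$ is supported on exactly three values $\{0,d_1,d_2\}$ with $0<d_1<d_2\le n$. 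Expanding the polynomials $(x-d_1)(x-d_2)K_k(x)$ in the Krawtchouk basis for $k=0,1,2$ and pairing with $a$ (which annihilates the Krawtchouk components of degrees $1$ through $4$) then yields closed formulas for $d_1+d_2$ and $d_1d_2$ as rational functions of $n$ and $q$, together with one genuine polynomial identity $\Phi(n,q)=0$. Equivalently, the relations $R_0,R_{d_1},R_{d_2}$ restricted to $C$ form a strongly regular graph whose parameters are determined by $n$ and $q$.

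Second, I would write down the integrality and positivity conditions that a genuine design must obey, and hence that this numerical data must satisfy: $d_1$ and $d_2$ are distinct integers in $[1,n]$ — so the discriminant $(d_1-d_2)^2$, a rational function of $n$ and $q$, must be a perfect square; $a_{d_1}|C|$ and $a_{d_2}|C|$ are non-negative integers; and the two nontrivial eigenvalues of the strongly regular graph on $C$ have integer multiplicities, which in the ``half case'' of irrational eigenvalues forces $|C|$ into a fixed residue class modulo $4$. Together with $\Phi(n,q)=0$ these amount to a Diophantine system in $n$ and $q$.

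Third, I would solve this system by treating $q$ as a parameter. For $q=2$ and $q=3$ one checks that the only admissible solutions are $n=5$ and $n=11$, giving cases (1) and (2). For $q\ge4$, the requirement that $(d_1-d_2)^2$ be a perfect square, combined with $\Phi(n,q)=0$, reduces after clearing denominators to a Pell-type equation; analysing its solution set and keeping only those solutions that meet the remaining integrality and positivity constraints leaves exactly $q=6$, where one is left with $n=(9a^2+1)/5$ and $|C|=9a^2(9a^2-1)/2$ for a positive integer $a$. There $5\mid 9a^2+1$ is precisely $a\equiv\pm1\pmod5$, while $a\equiv0\pmod3$ and $a\equiv5\pmod{16}$ are exactly the $3$-adic and $2$-adic conditions needed to make $d_1$, $d_2$ and the surviving multiplicities integral.

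The step I expect to be the main obstacle is this last, number-theoretic one — and in particular the exclusion of every $q\ge4$ with $q\ne6$: the Pell-type equation on its own is far from decisive, so one must carry the entire list of admissibility conditions along simultaneously and play them against each other modulo small primes. A lesser technical burden is organising the Krawtchouk-basis expansions so that $d_1+d_2$, $d_1d_2$, and the strongly-regular-graph parameters emerge in a form clean enough to drive the Diophantine analysis.
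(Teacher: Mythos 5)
First, note that the paper does not prove Theorem~\ref{thm:coa} at all: it is quoted from Noda~\cite{N1979}, and the paper's contribution is only the elimination of case (3). So the comparison here is with Noda's original argument, whose general strategy your outline does share: tightness forces degree $2$ (so the inner distribution is supported on $\{0,\alpha_1,\alpha_2\}$), the two distances are the zeros of $\sum_{j=0}^{2}K_{n,q,j}(x)$ and must be integers in $[1,n]$ (this is Theorem~\ref{thm:wilson}, which the paper explicitly says Noda used), and the remaining work is Diophantine. Up to that point your plan is sound.

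The genuine gap is that the Diophantine analysis, which \emph{is} the theorem, is asserted rather than carried out, and the object you propose to drive it with does not exist. There is no ``genuine polynomial identity $\Phi(n,q)=0$'': pairing $(x-\alpha_1)(x-\alpha_2)K_k(x)$ with the inner distribution for $k=0,1,2$ gives three equations that are linear in $\alpha_1+\alpha_2$ and $\alpha_1\alpha_2$, but they are automatically consistent for \emph{every} $(n,q)$, because with $|C|$ equal to the Rao bound one has the polynomial identity $\sum_{j=0}^{2}K_{n,q,j}(x)=|C|(1-x/\alpha_1)(1-x/\alpha_2)$ identically in $x$ whenever $\alpha_1,\alpha_2$ are defined as the (possibly irrational) roots of that quadratic. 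So $(n,q)$ is not confined to an algebraic curve, and there is no ``Pell-type equation'' to solve at the level you describe; the only constraints available are integrality and positivity conditions (integral zeros, integral $a_{\alpha_1}|C|$, $a_{\alpha_2}|C|$, integral multiplicities of the $2$-class subscheme, plus the finer divisibility conditions Noda extracts). Deducing from these that $q\in\{2,3,6\}$, that $q=2,3$ force $n=5,11$, and that $q=6$ forces $n=(9a^2+1)/5$ with $a\equiv 0\pmod 3$, $a\equiv\pm1\pmod 5$, $a\equiv 5\pmod{16}$ is exactly the hard number-theoretic core of \cite{N1979}; in your write-up it appears only as ``one checks'' and ``leaves exactly $q=6$,'' with the congruences asserted to be ``exactly the $2$-adic and $3$-adic conditions needed'' but never derived. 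As it stands, the proposal reduces the theorem to itself at the decisive step, so it cannot be accepted as a proof without that elimination being done in detail.
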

The existence and uniqueness for (1) or (2) had been shown
(see Examples~\ref{H52} and~\ref{H113} in Section~\ref{sect:4class}).
It remains open to determine whether the third case exists or not.
In this paper, we show that there is no tight $4$-design as in Theorem~\ref{thm:coa}(3).

We briefly outline how to prove the non-existence result.
A unifying framework to study designs in the above mentioned settings
are {\em $Q$-polynomial association schemes}, which were introduced and developed by Delsarte~\cite{D}.
In particular, an important necessary condition
for the existence of tight designs in the Johnson association schemes
was established by Wilson (according to~\cite[Page 6]{D}, see also~\cite{RW}),
and the result was extended to tight designs
in $Q$-polynomial association schemes~\cite{D},
including the Johnson and Hamming association schemes,
and in the real unit sphere~\cite{DGS}.
Noda used this result (see Theorem~\ref{thm:wilson})
in his proof of Theorem~\ref{thm:coa}.

Furthermore, due to Delsarte's work, a tight $4$-design $C$ in $H(n,q)$
yields an association scheme of $2$ classes.
Decompose the vertex set $C$ into $q$ disjoint subsets which can be identified with orthogonal arrays of strength $3$ in $H(n-1,q)$.
We then apply an analogue of the result in~\cite{S}
to these subsets in the Hamming association schemes
to construct another association scheme $S$,
which, however, satisfies all known feasibility conditions.
The association scheme $S$ turns out to be $Q$-antipodal, and
this property allows us to calculate the {\em triple intersection numbers}
with respect to some triples of vertices of $S$.
Triple intersection numbers can be thought of as a generalization of intersection numbers
to triples of starting vertices instead of pairs, and, to our best knowledge,
their investigation has been previously used
to study strongly regular~\cite{CGS}
and distance-regular graphs~\cite{CJ,GK,JKT,JV2012,JV2017,U} only,
but not strictly $Q$-polynomial association schemes.
We hope that this approach will find more applications in the theory of association schemes.

In the case when $S$ corresponds to a tight $4$-design in $H(n,6)$,
as in Theorem~\ref{thm:coa}(3),
certain triple intersection numbers turn out to be non-integral,
which leads to a contradiction.
This completes the classification of tight $4$-designs in $H(n,q)$, or in other words,
that of tight orthogonal arrays of strength $4$.

The existence and classification problems of tight $2e$-designs in $H(n,q)$
have been ex\-ten\-sive\-ly studied.
Together with our result (see Corollary~\ref{cor:nonex}),
its current state is summarized in the following theorem.
\begin{theorem}\label{thm:tightinH}
The following hold.
\begin{enumerate}
\item {\rm{\cite[Theorem~7.5]{HSS}}} For $e=1,q=2$, a tight $2$-design in $H(n,2)$ is equivalent to a Hadamard matrix of order $n+1$.
\item {\rm{\cite[Theorem~3.1]{HSS}}} For $e=1,q\ge 3$, there exists a tight $2$-design in $H(q^2,q)$ for any prime power $q$.
\item {\rm{\cite{H}}} For $e\geq 3,q\geq 3$, there is no tight $2e$-design in $H(n,q)$.
\item {\rm{\cite{N1979}}} For $e=2$,
if there exists a tight $4$-design $C$ in $H(n,q)$,
then one of the following occurs:
\begin{enumerate}
\item $(|C|,n,q)=(16,5,2)$,
\item $(|C|,n,q)=(243,11,3)$.
\end{enumerate}
\item {\rm{\cite{MK}}} For $e=3,q=2$,
if there exists a tight $6$-design in $H(n,2)$, then $n=7,23$. \bqed
\end{enumerate}
\end{theorem}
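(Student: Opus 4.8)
Parts (1), (2), (3) and (5) are quoted verbatim from the cited references and require nothing further; the whole content of the theorem is that case~(3) of Noda's Theorem~\ref{thm:coa} can be deleted, which turns Theorem~\ref{thm:coa} into part~(4). So the plan is to establish the non-existence of a tight $4$-design $C$ in $H(n,6)$ whose parameters satisfy Theorem~\ref{thm:coa}(3); this is exactly Corollary~\ref{cor:nonex}, and everything else in the statement is bookkeeping.

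First I would invoke Delsarte's theory of tight designs in $Q$-polynomial schemes (Theorem~\ref{thm:wilson} and the surrounding results) to note that such a $C$ carries the structure of a $2$-class association scheme on its point set, whose eigenvalues and intersection numbers are determined by $(|C|,n,q)$ and hence, in case~(3), by the single integer $a$; I would record these explicitly. Next I would fix one coordinate of $H(n,6)$ and partition $C$ according to the value in that coordinate: this yields $q=6$ subsets, each identifiable with an $\OA$ of strength $3$ in $H(n-1,6)$ of size $|C|/6$, and the strength-$4$ design condition together with the $2$-class structure should force these subsets to sit inside the scheme in an equitable fashion. Adapting the construction of~\cite{S} from its original setting to the Hamming scheme, I would then glue this data into a larger association scheme $S$ on a refined vertex set, compute its parameters, and verify that it passes all the standard feasibility tests (integrality and nonnegativity of the intersection numbers, the Krein conditions, the absolute bound), so that no contradiction is yet visible at the level of a single scheme. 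The crucial structural point to prove is that $S$ is $Q$-antipodal.

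With $Q$-antipodality in hand, the linear system governing the triple intersection numbers with respect to a suitably chosen triple of vertices of $S$ becomes constrained enough that these numbers are pinned down, up to finitely many free parameters, by the already known intersection numbers. Specializing to $q=6$ and $n=(9a^2+1)/5$, I would solve this system for a well-chosen triple and show that some triple intersection number is forced to be non-integral — impossible, since it counts vertices — with the obstruction appearing as a congruence condition on $a$ incompatible with $a\equiv 0\pmod 3$, $a\equiv\pm1\pmod 5$, $a\equiv 5\pmod{16}$. The hard part will be the middle step: constructing $S$ correctly and proving it is $Q$-antipodal, since triple intersection numbers have not previously been used for strictly $Q$-polynomial schemes and the equitability needed for the gluing must be extracted from the design property with care; once $S$ and its $Q$-antipodality are available, the concluding non-integrality computation, though delicate, is finite and mechanical.
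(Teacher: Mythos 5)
Your proposal is correct and follows essentially the same route as the paper: parts (1)--(3) and (5) are quoted from the literature, and part (4) is Noda's Theorem~\ref{thm:coa} combined with Corollary~\ref{cor:nonex}, which the paper likewise obtains by projecting the design along one coordinate, building the $Q$-antipodal $4$-class scheme of Theorem~\ref{thm:qant4}, and deriving a non-integral triple intersection number (Theorem~\ref{thm:nonex}) that forces $r=3a=9$, contradicting $a\equiv\pm1\pmod 5$. The only difference is one of detail, not of method.
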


The organization of the paper is as follows.
In Section~\ref{sect:Definitions}, we prepare basic notions for association schemes and orthogonal arrays.
In Section~\ref{sect:4class}, we show that tight $4$-designs in $H(n,q)$ yield $Q$-antipodal $Q$-polynomial association scheme of $4$ classes.
Finally, in Section~\ref{sect:Triple}, we analyze triple intersection numbers with respect to some triples of vertices of the scheme obtained in Section~\ref{sect:4class}
to conclude that there are no tight $4$-designs in $H(n,6)$.

\section{Preliminaries}\label{sect:Definitions}

In this section we prepare the notions needed in subsequent sections.

\subsection{Association schemes}\label{subsect:AS}
Let $X$ be a finite set of vertices
and $\{R_0,R_1,\ldots,R_D\}$ be a set of non-empty subsets of $X\times X$.
Let $A_i$ denote the adjacency matrix of the graph $(X,R_i)$
($0 \le i \le D$).
The pair $(X,\{R_i\}_{i=0}^D)$ is called
a {\em (symmetric) association scheme} of $D$ classes
if the following conditions hold:
\begin{enumerate}
\item $A_0 =I_{|X|}$, which is the identity matrix of size $|X|$,
\item $\sum_{i=0}^D A_i = J_{|X|}$,
which is the square all-one matrix of size $|X|$,
\item $A_i^\top=A_i$ ($1 \le i \le D$),
\item $A_iA_j=\sum_{k=0}^D p_{ij}^kA_k$,
where $p_{ij}^k$ are nonnegative integers  ($0 \le i,j \le D$).
\end{enumerate}
The nonnegative integers $p_{ij}^k$ are called {\em intersection numbers}.
The vector space $\mathcal{A}$ over $\mathbb{R}$ spanned by the matrices
$A_i$ forms an algebra.
Since $\mathcal{A}$ is commutative and semisimple,
there exists a unique basis of $\mathcal{A}$ consisting of
primitive idempotents $E_0=\frac{1}{|X|}J_{|X|},E_1,\ldots,E_D$.
Since the algebra $\mathcal{A}$ is closed
under the entry-wise multiplication denoted by $\circ$,
we define the {\em Krein parameters}
$q_{ij}^k$ ($0 \le i,j,k \le D$) by
$E_i\circ E_j=\frac{1}{|X|}\sum_{k=0}^D q_{ij}^kE_k$.
It is known that the Krein parameters are nonnegative real numbers
(see~\cite[Lemma~2.4]{D}).
Since both $\{A_0,A_1,\ldots,A_D\}$ and $\{E_0,E_1,\ldots,E_D\}$
form bases of $\mathcal{A}$,
there exists a matrix $Q=(Q_{ij})_{i,j=0}^D$ with
$E_i=\frac{1}{|X|}\sum_{j=0}^D Q_{ji}A_j$.
The matrix $Q$ is called the {\em second eigenmatrix}
of $(X,\{R_i\}_{i=0}^D)$.
An association scheme $(X,\{R_i\}_{i=0}^D)$
is said to be {\em $Q$-polynomial} if,
for some ordering of $E_1,\ldots,E_D$ and
for each $i$ ($0 \le i \le D$),
there exists a polynomial $v_i^*(x)$ of degree $i$
such that $Q_{ji}=v_i^*(Q_{j1})$ ($0 \le j \le D$).
It is also known that an association scheme is $Q$-polynomial
if and only if the matrix of  Krein parameters $L_1^*:=(q_{1j}^k)_{k,j=0}^D$
is a tridiagonal matrix with nonzero superdiagonal and subdiagonal~%
\cite[p.~193]{BI}
-- then $q_{ij}^k = 0$ holds whenever the triple $(i, j, k)$
does not satisfy the triangle inequality
(i.e., when $|i-j| < k$ or $i+j > k$).
For a $Q$-polynomial association scheme,
set $a_i^*=q_{1,i}^i$, $b_i^*=q_{1,i+1}^i$, and $c_i^*=q_{1,i-1}^i$.
These Krein parameters are usually gathered in the {\em Krein array}
$\{b_0^*, b_1^*, \dots, b_{D-1}^*; c_1^*, c_2^*, \dots, c_D^*\}$,
as the remaining Krein parameters of a $Q$-polynomial association scheme
can be computed from them.
We say that a $Q$-polynomial association scheme is {\em $Q$-antipodal}
if $b_i^*=c_{D-i}^*$ except possibly for $i = \lfloor D/2\rfloor$.
We simply say $Q$-antipodal association schemes for $Q$-antipodal $Q$-polynomial association schemes.
In a $Q$-antipodal association scheme,
we have $q_{ij}^k = 0$ whenever $i+j+k > 2D$
and the triple $(D-i, D-j, D-k)$ does not satisfy the triangle inequality.
See~\cite{DMM} and~\cite{MMW}
for more results on $Q$-antipodal association schemes.

There exists a matrix $G=(G_0\ G_1\ \cdots\ G_D)$
whose rows and columns are indexed by $X$,
satisfying that $GG^\top=|X|I_{|X|}$
and $G$ diagonalizes the adjacency matrices,
where $E_i=\frac{1}{|X|}G_iG_i^\top$ ($0 \le i \le D$)~\cite[p.~11]{D}.
We then define the $i$-th {\em characteristic matrix} $H_i$ of a non-empty subset $C$ of $X$ as the submatrix of $G_i$ that lies in the rows indexed by $C$. (Throughout this paper, a subset $C$ of $X$ is always non-empty.)

A subset $C$ of $X$
for a $Q$-polynomial association scheme $(X,\{R_i\}_{i=0}^D)$
is a {\em $t$-design} if its characteristic vector $\chi=\chi_C$
satisfies that $\chi^\top E_i\chi=0$ ($1 \le i \le t$).

\subsection{Triple intersection numbers}
For a triple of vertices $u, v, w \in X$ and integers $i$, $j$, $k$ ($0 \le i, j, k \le D$)
we denote by $\sV{u & v & w}{i & j & k}$
(or simply $[i\ j\ k]$ when it is clear
which triple $(u,v,w)$ we have in mind)
the number of vertices $x \in X$ such that
$(u, x) \in R_i$, $(v, x) \in R_j$ and $(w, x) \in R_k$.
We call these numbers {\em triple intersection numbers}.
% They have first been studied
% in the case of strongly regular graphs~\cite{CGS},
% and later also for distance-regular graphs,
% see for example~\cite{CJ,GK,JKT,JV2012,JV2017,U}.

Unlike the intersection numbers, the triple intersection numbers depend, in general, on the particular choice of $(u,v,w)$.
Nevertheless, for a fixed triple $(u,v,w)$,
we may write down a system of $3D^2$ linear Diophantine equations
with $D^3$ triple intersection numbers as variables,
thus relating them to the intersection numbers, cf.~\cite{JV2012}:
{\small
\begin{equation}
\sum_{\ell=0}^D [\ell\ j\ k] = p^U_{jk}, \qquad
\sum_{\ell=0}^D [i\ \ell\ k] = p^V_{ik}, \qquad
\sum_{\ell=0}^D [i\ j\ \ell] = p^W_{ij},
\label{eqn:triple}
\end{equation}
}
where $(v, w) \in R_U$, $(u, w) \in R_V$, $(u, v) \in R_W$, and
\[
[0\ j\ k] = \delta_{jW} \delta_{kV}, \qquad
[i\ 0\ k] = \delta_{iW} \delta_{kU}, \qquad
[i\ j\ 0] = \delta_{iV} \delta_{jU}.
\]
Moreover, the following theorem sometimes gives additional equations.

\begin{theorem}\label{thm:krein0}{\rm (\cite[Theorem~3]{CJ},
                                   cf.~\cite[Theorem~2.3.2]{BCN})}
Let $(X, \{R_i\}_{i=0}^D)$ be an association scheme of $D$ classes
with second eigenmatrix $Q$
and Krein parameters $q_{ij}^k$ $(0 \le i,j,k \le D)$.
Then,
\[
\pushQED{\bqed}
q_{ij}^k = 0 \quad \Longleftrightarrow \quad
\sum_{r,s,t=0}^D Q_{ri}Q_{sj}Q_{tk}\sV{u & v & w}{r & s & t} = 0
\quad \mbox{for all\ } u, v, w \in X. \qedhere
\popQED
\]
\end{theorem}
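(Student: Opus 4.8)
The plan is to reduce the claimed equivalence to a single sum‑of‑squares identity. Let $e_u$ denote the standard basis vector of $\mathbb{R}^X$ indexed by $u\in X$. Since $\sV{u & v & w}{r & s & t}=\sum_{x\in X}(A_r)_{ux}(A_s)_{vx}(A_t)_{wx}$ and $E_i=\frac{1}{|X|}\sum_{r=0}^D Q_{ri}A_r$, interchanging the order of summation gives
\[
\sum_{r,s,t=0}^D Q_{ri}Q_{sj}Q_{tk}\sV{u & v & w}{r & s & t}
=|X|^3\sum_{x\in X}(E_i)_{ux}(E_j)_{vx}(E_k)_{wx}.
\]
Abbreviating $f(u,v,w):=\sum_{x\in X}(E_i)_{ux}(E_j)_{vx}(E_k)_{wx}$, the theorem becomes the assertion that $f(u,v,w)=0$ for all $u,v,w\in X$ if and only if $q_{ij}^k=0$.

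The main step is to evaluate $\sum_{u,v,w\in X}f(u,v,w)^2$. Expanding the square and performing the sums over $u$, $v$, $w$ one at a time, each collapses by the identity $\sum_{u\in X}(E_i)_{ux}(E_i)_{uy}=(E_i^2)_{xy}=(E_i)_{xy}$ (using $E_i^\top=E_i=E_i^2$), and similarly for $j$ and $k$. This yields
\[
\sum_{u,v,w\in X}f(u,v,w)^2
=\sum_{x,y\in X}(E_i)_{xy}(E_j)_{xy}(E_k)_{xy}
=\sum_{x,y\in X}(E_i\circ E_j)_{xy}(E_k)_{yx}
=\operatorname{tr}\bigl((E_i\circ E_j)E_k\bigr).
\]
Now $E_i\circ E_j=\frac{1}{|X|}\sum_{\ell=0}^D q_{ij}^\ell E_\ell$ by the definition of the Krein parameters, and since $E_0,\dots,E_D$ are pairwise orthogonal idempotents we get $(E_i\circ E_j)E_k=\frac{1}{|X|}q_{ij}^k E_k$; hence
\[
\sum_{u,v,w\in X}f(u,v,w)^2=\frac{1}{|X|}q_{ij}^k\operatorname{tr}(E_k)=\frac{q_{ij}^k\,m_k}{|X|},
\]
where $m_k:=\operatorname{rank}(E_k)=\operatorname{tr}(E_k)$ is the (positive) multiplicity of the $k$-th primitive idempotent.

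Both implications then follow at once: the left‑hand side is a sum of nonnegative reals and equals a positive multiple of $q_{ij}^k$, so it vanishes --- equivalently, every $f(u,v,w)$ vanishes --- exactly when $q_{ij}^k=0$; translating back through the first displayed identity proves the theorem. I do not anticipate a genuine obstacle here: the only points requiring care are keeping track of the powers of $|X|$ (coming from $E_i=\frac{1}{|X|}\sum_r Q_{ri}A_r$ and from $\operatorname{tr}(E_k)=m_k$) and invoking the standard facts $E_\ell E_m=\delta_{\ell m}E_\ell$ and $\operatorname{tr}(E_k)=\operatorname{rank}(E_k)$; in particular, nonnegativity of the Krein parameters is not needed, only $m_k\ge 1$. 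Should one prefer to bypass the Hadamard‑product identity, the identical computation can be phrased with the vectors $\xi_x:=E_ie_x\otimes E_je_x\otimes E_ke_x$, noting that the sum in the statement equals $|X|^3\langle E_ie_u\otimes E_je_v\otimes E_ke_w,\ \sum_{x}\xi_x\rangle$, that the tensors $E_ie_u\otimes E_je_v\otimes E_ke_w$ span $\operatorname{col}(E_i)\otimes\operatorname{col}(E_j)\otimes\operatorname{col}(E_k)\ni\sum_x\xi_x$ as $u,v,w$ range over $X$, and that $\|\sum_x\xi_x\|^2=\operatorname{tr}\bigl((E_i\circ E_j)E_k\bigr)$.
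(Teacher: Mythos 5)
Your proof is correct: the reduction to $f(u,v,w)=\sum_x (E_i)_{ux}(E_j)_{vx}(E_k)_{wx}$ and the sum-of-squares identity $\sum_{u,v,w}f(u,v,w)^2=\operatorname{tr}\bigl((E_i\circ E_j)E_k\bigr)=q_{ij}^k m_k/|X|$ are exactly the standard argument. The paper itself gives no proof, citing \cite{CJ} and \cite[Theorem~2.3.2]{BCN}, and your write-up is essentially the proof found in those sources, so there is nothing further to reconcile.
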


\subsection{Hamming association schemes and orthogonal arrays}\label{subsect:HammingOA}
Let $V=\{1,2,\ldots,q\}$ ($q\ge2$) and $X=V^n$.
For $x=(x_1,\ldots,x_n),y=(y_1,\ldots,y_n)\in X$, define the {\it Hamming distance} $d(x,y)$ to be the number of indices $i$ with $x_i\neq y_i$.
Suppose that $R_i=\{(x,y)\mid x,y\in X, d(x,y)=i\}$
for $i=0,1,\ldots,n$.
Then the pair $(X,\{R_i\}_{i=0}^n)$ is an association scheme,
which is called the {\em Hamming} association scheme $H(n,q)$.
The Hamming association scheme has the second eigenmatrix $Q=(K_{n,q,j}(i))_{i,j=0}^n$ and is a $Q$-polynomial
association scheme with the polynomials $v_i^*(x)=K_{n,q,i}(((q-1)n-x)/q)$,
where $K_{n,q,i}(x)$ is the Krawtchouk polynomial of degree $i$ defined as
$
K_{n,q,i}(x)=\sum_{j=0}^i(-1)^j (q-1)^{i-j}\binom{x}{j}\binom{n-x}{i-j}.
$

An {\it orthogonal array} $\OA(N,n,q,t)$
is an $N\times n$ matrix $M$ with entries the numbers $1,2,\ldots,q$
such that in any $N\times t$ submatrix of $M$
all possible row vectors of length $t$ occur equally often~\cite{HSS}.
Let $C$ be the set of row vectors of $M$.
We identify the orthogonal array $M$ with the subset $C$ in $X$.
It is known from~\cite[Theorem~4.4]{D}
that an orthogonal array $\OA(N,n,q,t)$
is equivalent to a $t$-design $C$ with $|C|=N$
in the Hamming association scheme $H(n,q)$.

For $t=2e$, the lower bound \eqref{eq:tight} on $N$ was given by Rao~\cite{R}.
% as follows:
% \begin{align}\label{eq:tight}
% N\geq \sum_{k=0}^e \binom{n}{k}(q-1)^k.
% \end{align}
An orthogonal array is said to be {\it complete} or {\it tight}
if it achieves equality in this bound.

The {\it degree set} of an orthogonal array $C$ is the set $S(C)$ of Hamming distances of $x,y$ among distinct $x,y\in C$,
and the {\it degree} $s$ of $C$ is defined as $s=|S(C)|$.
It is known that a tight $2e$-design has degree $e$~\cite[Theorem~5.12]{D}.
The following lemma characterizes designs in terms of their characteristic matrices.
The subsequent lemma and theorems are valid for any $Q$-polynomial association scheme, but we state these only for $H(n,q)$.
\begin{lemma}{\rm \cite[Theorem~3.15]{D}}\label{lem:cha}
Let $C$ be a subset in $H(n,q)$. The following conditions are equi\-valent:
\begin{enumerate}
\item $C$ is a $t$-design,
\item $H_k^\top H_\ell=\delta_{k\ell}|C|I \quad
\text{for} \quad 0\leq k+\ell\leq t$. \bqed
\end{enumerate}
\end{lemma}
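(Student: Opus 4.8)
\medskip\noindent\emph{Proof proposal.} The plan is to rephrase both conditions as statements about the Gram matrices $H_k^{\top}H_{\ell}$ and to connect them through the Hadamard product $E_k\circ E_{\ell}=\frac{1}{|X|}\sum_{m}q_{k\ell}^{m}E_m$, i.e.\ through the Krein parameters. Write $\chi=\chi_C$ and $m_k=\operatorname{rank}E_k$. First I would record the elementary identity $H_kH_k^{\top}=|X|\,(E_k)_C$, where $(E_k)_C$ denotes the principal submatrix of $E_k$ on the rows and columns indexed by $C$; this is immediate from $E_k=\frac{1}{|X|}G_kG_k^{\top}$ and the definition of $H_k$ as the rows of $G_k$ in $C$. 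Taking the trace of $H_kH_k^{\top}H_{\ell}H_{\ell}^{\top}$ and using $\chi^{\top}E_0\chi=|C|^2/|X|$, this yields the key formula
\[
\bigl\|H_k^{\top}H_{\ell}\bigr\|_F^2=\operatorname{tr}\bigl(H_kH_k^{\top}H_{\ell}H_{\ell}^{\top}\bigr)=|X|\sum_{m=0}^{D}q_{k\ell}^{m}\,\chi^{\top}E_m\chi ,
\]
where $\|\cdot\|_F$ is the Frobenius norm. The auxiliary facts I would use are: $\chi^{\top}E_m\chi=\|E_m\chi\|^2\ge 0$, so that $C$ is a $t$-design exactly when $E_m\chi=0$ for $1\le m\le t$; $q_{k\ell}^{m}=0$ unless $|k-\ell|\le m\le k+\ell$, by $Q$-polynomiality; $q_{k\ell}^{0}=\delta_{k\ell}m_k$; and $\operatorname{tr}(H_k^{\top}H_k)=|C|\,m_k$, coming from the constant diagonal $(E_k)_{xx}=m_k/|X|$.

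For (1) $\Rightarrow$ (2) I would fix $k,\ell$ with $k+\ell\le t$. Every index $m\ge 1$ that contributes to the key formula has $q_{k\ell}^{m}\ne 0$, hence $m\le k+\ell\le t$ and $\chi^{\top}E_m\chi=0$; only the term $m=0$ survives, giving $\|H_k^{\top}H_{\ell}\|_F^2=\delta_{k\ell}\,m_k|C|^2$. If $k\ne\ell$ this already forces $H_k^{\top}H_{\ell}=0$. If $k=\ell$, I would combine $\operatorname{tr}(H_k^{\top}H_k)=|C|m_k$ with $\operatorname{tr}\bigl((H_k^{\top}H_k)^2\bigr)=m_k|C|^2$: by the Cauchy--Schwarz inequality applied to the (nonnegative) eigenvalues of the positive semidefinite matrix $H_k^{\top}H_k$, these must all equal $|C|$, so $H_k^{\top}H_k=|C|I$.

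For (2) $\Rightarrow$ (1), assume $H_k^{\top}H_{\ell}=\delta_{k\ell}|C|I$ for every $k+\ell\le t$. The key formula and $q_{k\ell}^{0}=\delta_{k\ell}m_k$ then give $\sum_{m\ge 1}q_{k\ell}^{m}\,\chi^{\top}E_m\chi=0$, and since all Krein parameters and all $\chi^{\top}E_m\chi$ are nonnegative, every summand vanishes. To obtain $\chi^{\top}E_m\chi=0$ for a prescribed $m$ with $1\le m\le t$, I would take the pair $(k,\ell)=(1,m-1)$: then $k+\ell=m\le t$, while $q_{1,m-1}^{m}=c_m^{*}\ne 0$ since the subdiagonal of $L_1^{*}$ is non-zero for a $Q$-polynomial scheme. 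Hence $\chi^{\top}E_m\chi=0$ for all $1\le m\le t$, i.e.\ $C$ is a $t$-design.

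The whole argument is bookkeeping rather than a deep fact: the work lies in establishing the trace identity and in pinning down the normalizations $q_{k\ell}^{0}=\delta_{k\ell}m_k$, $(E_k)_{xx}=m_k/|X|$ and $\chi^{\top}E_0\chi=|C|^2/|X|$, and in checking that the triangle-inequality support of the Krein parameters isolates exactly the $m=0$ term once $k+\ell\le t$. I do not anticipate a genuine obstacle; the $Q$-polynomial hypothesis enters only through the vanishing pattern of the Krein parameters and through the single non-vanishing parameter $c_m^{*}$.
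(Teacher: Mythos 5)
Your proof is correct, but note that the paper does not prove this lemma at all: it is quoted directly from Delsarte \cite[Theorem~3.15]{D}, so there is no in-paper argument to compare against. Your route is a legitimate self-contained one and is close in spirit to Delsarte's original argument, which also rests on expanding entries of products of the characteristic matrices over $C$ via the Krein parameters; your packaging through the trace identity $\operatorname{tr}\bigl(H_kH_k^{\top}H_{\ell}H_{\ell}^{\top}\bigr)=|X|\sum_m q_{k\ell}^m\,\chi^{\top}E_m\chi$, together with the Cauchy--Schwarz equality case forcing all eigenvalues of $H_k^{\top}H_k$ to equal $|C|$, is a clean way to get the matrix identity from purely scalar information, and every ingredient you invoke ($q_{k\ell}^m=0$ for $m>k+\ell$ in a $Q$-polynomial scheme, $q_{k\ell}^0=\delta_{k\ell}m_k$, nonnegativity of the Krein parameters and of $\chi^{\top}E_m\chi$, constant diagonal of $E_k$) is standard and correctly used. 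One small simplification you could make: in the direction (2) $\Rightarrow$ (1) you do not need the pair $(1,m-1)$ and the nonvanishing of $c_m^{*}$; since $H_0$ is the all-ones vector on $C$, the hypothesis for the pair $(0,m)$ with $m\le t$ gives $H_m^{\top}H_0=0$, and $|X|\,\chi^{\top}E_m\chi=\|H_m^{\top}H_0\|^2$ yields $\chi^{\top}E_m\chi=0$ directly, without any appeal to the $Q$-polynomial ordering in that direction.
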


Then the following theorems are crucial.
\begin{theorem}\label{thm:wilson}
Let $C$ be a tight $2e$-design in $H(n,q)$ with degree set $S=\{\alpha_1,\ldots,\alpha_e\}$.
Then $|C|\prod_{i=1}^e (1-x/\alpha_i)=\sum_{j=0}^e K_{n,q,j}(x)$ holds.
In particular, $\sum_{j=0}^e K_{n,q,j}(x)$
has exactly $e$ distinct integral zeros in the interval $[1,n]$.
\end{theorem}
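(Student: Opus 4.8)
The plan is to recognize both sides as polynomials in $x$ of degree at most $e$ and to check that they agree at the $e+1$ nodes $0,\alpha_1,\dots,\alpha_e$; since these nodes are pairwise distinct (the $\alpha_i$ are distinct by the definition of the degree set, and each $\alpha_i\ge 1$ because it is a Hamming distance between distinct vertices), this forces the two polynomials to coincide. Write $F(x)=\sum_{j=0}^e K_{n,q,j}(x)$, a polynomial of degree at most $e$, and observe that $|C|\prod_{i=1}^e(1-x/\alpha_i)$ has degree exactly $e$. At $x=0$ the definition of the Krawtchouk polynomial gives $K_{n,q,j}(0)=\binom{n}{j}(q-1)^j$, so $F(0)=\sum_{j=0}^e\binom{n}{j}(q-1)^j$, and this equals $|C|$ precisely because $C$ is tight (equality in \eqref{eq:tight}), matching the value of the right-hand side at $x=0$.

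It remains to show $F(\alpha_m)=0$ for each $m$, and here tightness is used in full. Let $N=(H_0\ H_1\ \cdots\ H_e)$ be the horizontal concatenation of the first $e+1$ characteristic matrices of $C$. Because $C$ is a $2e$-design, Lemma~\ref{lem:cha} yields $H_k^\top H_\ell=\delta_{k\ell}|C|I$ whenever $k+\ell\le 2e$, in particular for all $k,\ell\in\{0,\dots,e\}$, so $N^\top N=|C|I$. The matrix $N$ has $|C|$ rows and $\sum_{k=0}^e m_k$ columns, where $m_k=\operatorname{rank}E_k=\binom{n}{k}(q-1)^k$ is the multiplicity in $H(n,q)$; tightness makes this column count equal to $|C|$, so $N$ is square, and from $N^\top N=|C|I$ we get $NN^\top=|C|I$. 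Comparing the $(x,y)$-entries for distinct $x,y\in C$ gives $\sum_{k=0}^e (H_kH_k^\top)_{xy}=0$. Since $H_k$ is the row-restriction of $G_k$ to $C$ and $E_k=\tfrac{1}{|X|}G_kG_k^\top$, we have
\[
(H_kH_k^\top)_{xy}=(G_kG_k^\top)_{xy}=|X|\,(E_k)_{xy}=Q_{d(x,y),k}=K_{n,q,k}(d(x,y)),
\]
so the vanishing sum above is exactly $F(d(x,y))$. As $(x,y)$ ranges over all distinct pairs in $C$, the distance $d(x,y)$ takes every value in the degree set $S=\{\alpha_1,\dots,\alpha_e\}$ (here we use that a tight $2e$-design has degree $e$, \cite[Theorem~5.12]{D}), whence $F(\alpha_m)=0$ for every $m$. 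Therefore $F(x)=|C|\prod_{i=1}^e(1-x/\alpha_i)$.

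The final assertion is then immediate: $\sum_{j=0}^e K_{n,q,j}(x)=F(x)=|C|\prod_{i=1}^e(1-x/\alpha_i)$ has degree exactly $e$ and zero set precisely $\{\alpha_1,\dots,\alpha_e\}$, which consists of $e$ distinct integers (Hamming distances between distinct codewords) lying in $[1,n]$, with no further zeros. I do not expect a genuine obstacle here: the only substantive step is upgrading $N^\top N=|C|I$ to $NN^\top=|C|I$, which requires squareness and hence tightness, together with the identification of the off-diagonal entries of $NN^\top$ with the values $F(d(x,y))$; the remaining points — distinctness of the interpolation nodes and $|S|=e$ — are either immediate or quoted.
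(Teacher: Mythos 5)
Your proof is correct. The paper itself offers no argument beyond citing Delsarte's Theorem~5.21, and your reasoning — the $2e$-design condition gives $N^\top N=|C|I$ for $N=(H_0\ \cdots\ H_e)$, tightness makes $N$ square so that $NN^\top=|C|I$, and reading off the off-diagonal entries forces $\sum_{k=0}^e K_{n,q,k}(d(x,y))=0$, after which interpolation at $0,\alpha_1,\dots,\alpha_e$ identifies the two polynomials — is essentially the standard proof of that cited result, so there is nothing to add.
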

\begin{proof}
See~\cite[Theorem~5.21]{D}.
\end{proof}
Let $C$ be a subset in $H(n,q)$ with degree set $S(C)=\{\alpha_1,\ldots,\alpha_s\}$.
Set $\alpha_0=0$.
Define $S_i=\{(x,y)\in C\times C\mid d(x,y)=\alpha_i\}$ ($0 \le i \le s$).
\begin{theorem}\label{thm:t2s-2}
Let $C$ be a $t$-design in $H(n,q)$ with degree $s$.
If $t\geq 2s-2$, then the pair $(C,\{S_i\}_{i=0}^s)$ is a $Q$-polynomial association scheme of $s$ classes.
\end{theorem}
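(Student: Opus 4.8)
The plan is to realize the putative scheme through the characteristic matrices of $C$: I will exhibit its primitive idempotents as scalings of submatrices of the primitive idempotents of $H(n,q)$, together with one extra idempotent, and then read off the $Q$-polynomial ordering from the Krawtchouk structure of $H(n,q)$. For $0\le i\le n$ set $\hat E_i:=\frac1{|X|}H_iH_i^{\top}$, the submatrix of $E_i$ with rows and columns indexed by $C$; then $\hat E_0=\frac1{|X|}J_{|C|}$ and $\sum_{i=0}^n\hat E_i=I_{|C|}$ (since $GG^{\top}=|X|I$). Let $B_i$ be the adjacency matrix of $(C,S_i)$, i.e.\ the submatrix of $A_{\alpha_i}$ indexed by $C$ (with $\alpha_0=0$); the $B_i$ are symmetric, nonzero, have pairwise disjoint supports, hence form a basis of their span $\mathcal B$, so $\dim\mathcal B=s+1$, with $B_0=I$ and $\sum_{i=0}^sB_i=J_{|C|}$. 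It suffices to prove that $\mathcal B$ is closed under matrix multiplication and that the scheme so obtained is $Q$-polynomial: indeed, once $\mathcal B$ is a commutative algebra, writing $B_iB_j=\sum_kp_{ij}^kB_k$, each $p_{ij}^k$ is forced to be a nonnegative integer (the $(x,z)$-entry of $B_iB_j$ counts vertices and depends only on the relation containing $(x,z)$), so $(C,\{S_i\}_{i=0}^s)$ is an association scheme of $s$ classes.

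Now the two hypotheses come in. Because $C$ has degree $s$, the submatrix $A_k|_C$ vanishes for $k\notin\{0,\alpha_1,\dots,\alpha_s\}$, so restricting $E_i=\frac1{|X|}\sum_kK_{n,q,i}(k)A_k$ to $C$ gives $\hat E_i=\frac1{|X|}\sum_{j=0}^sK_{n,q,i}(\alpha_j)B_j\in\mathcal B$ for every $i$. Because $C$ is a $t$-design, Lemma~\ref{lem:cha} gives $H_k^{\top}H_\ell=\delta_{k\ell}|C|I$ whenever $k+\ell\le t$, hence
\[
\hat E_k\hat E_\ell=\tfrac1{|X|^2}H_k\bigl(H_k^{\top}H_\ell\bigr)H_\ell^{\top}=\delta_{k\ell}\tfrac{|C|}{|X|}\hat E_k\qquad(k+\ell\le t),
\]
and since $t\ge 2s-2$ this applies to all $k,\ell\in\{0,\dots,s-1\}$, so $F_k:=\frac{|X|}{|C|}\hat E_k$ $(0\le k\le s-1)$ are $s$ mutually orthogonal idempotents in $\mathcal B$, and $F_s:=I-\sum_{k<s}F_k\in\mathcal B$ then satisfies $F_sF_k=0$ and $F_s^2=F_s$. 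Moreover $F_s\neq0$: otherwise $\frac{|X|}{|C|}\sum_{k<s}\hat E_k=I$, and comparing $(x,x)$-entries (with $(\hat E_k)_{xx}=m_k/|X|$, where $m_k$ is the multiplicity of $E_k$) would force $\sum_{k=0}^{s-1}m_k=|C|$; but $C$ is also a $(2s-2)$-design, so the Rao bound gives $|C|\ge\sum_{k=0}^{s-1}m_k$, with equality only if $C$ is a tight $(2s-2)$-design, which would have degree $s-1\neq s$. Hence $F_0,\dots,F_s$ is a complete system of nonzero mutually orthogonal idempotents summing to $I$; being $s+1$ independent elements of the $(s+1)$-dimensional space $\mathcal B$, they form a basis of it, so $\mathcal B$ is closed under multiplication and $(C,\{S_i\}_{i=0}^s)$ is an association scheme of $s$ classes with primitive idempotents $F_0,\dots,F_s$.

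For the $Q$-polynomial property I would take the ordering $F_0,F_1,\dots,F_{s-1},F_s$. For $i\le s-1$, the relation $\hat E_i=\frac{|C|}{|X|}F_i$ and the restriction above give $F_i=\frac1{|C|}\sum_{j=0}^sK_{n,q,i}(\alpha_j)B_j$, so the $i$-th column of the second eigenmatrix $Q'$ of the new scheme is $Q'_{ji}=K_{n,q,i}(\alpha_j)$, while the last column is forced by $\sum_{i=0}^sQ'_{ji}=|C|\delta_{j0}$ to be $Q'_{js}=|C|\delta_{j0}-\sum_{i<s}K_{n,q,i}(\alpha_j)$. With $\theta_j:=Q'_{j1}=(q-1)n-q\alpha_j$ (pairwise distinct, since the $\alpha_j$ are), the substitution $x\mapsto((q-1)n-x)/q$ makes $Q'_{ji}=v_i^*(\theta_j)$ with $v_i^*(x)=K_{n,q,i}\bigl(((q-1)n-x)/q\bigr)$ of degree $i$ for $i\le s-1$ (these are exactly the polynomials witnessing the $Q$-polynomiality of $H(n,q)$); for the last column one sets
\[
v_s^*(x):=\frac{|C|}{\prod_{l=1}^s(\theta_0-\theta_l)}\prod_{l=1}^s(x-\theta_l)-\sum_{i=0}^{s-1}v_i^*(x),
\]
which has degree exactly $s$ and, as the product vanishes at $\theta_1,\dots,\theta_s$ and the leading coefficient is chosen to fit $\theta_0$, satisfies $v_s^*(\theta_j)=Q'_{js}$ for all $j$. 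Hence $(C,\{S_i\}_{i=0}^s)$ is a $Q$-polynomial association scheme of $s$ classes.

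The hypothesis $t\ge 2s-2$ is used essentially once, and crucially: it is what makes all of $\hat E_0,\dots,\hat E_{s-1}$ behave as orthogonal idempotents, supplying $s$ of the required $s+1$ primitive idempotents, the last of which is then manufactured as $I$ minus their sum. The steps demanding the most care are the nonvanishing $F_s\neq 0$ (where one must invoke the Rao bound and the fact that tight $(2s-2)$-designs have degree $s-1$) and the interpolation producing $v_s^*$, where the single available degree of freedom must be shown to lift a degree-$(s-1)$ combination to a genuine degree-$s$ polynomial; I would also double-check that strength strictly below $2s-2$ does not suffice.
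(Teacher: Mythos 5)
Your proposal is correct and follows essentially the same route as the paper's proof: you build the idempotents $F_0,\dots,F_{s-1}$ from the characteristic matrices (your $\tfrac{|X|}{|C|}\hat E_k$ is exactly the paper's $\tfrac1{|C|}H_kH_k^\top$), use Lemma~\ref{lem:cha} with $t\ge 2s-2$ for their orthogonality, take $F_s=I-\sum_{k<s}F_k$ to get a basis of idempotents forcing closure of the Bose--Mesner algebra, and read off $Q$-polynomiality from the Krawtchouk polynomials. Your two elaborations --- the Rao-bound argument that $F_s\neq O$ and the explicit interpolation giving $v_s^*$ of degree exactly $s$ --- are correct fillings-in of points the paper leaves implicit rather than a different approach.
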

\begin{proof}
Let $A_i$ be the adjacency matrix of the graph $(C,S_i)$ for each $i$, and
$\mathcal{A}$ the vector space spanned by $A_0,A_1,\ldots,A_s$.

Let $H_i$ be the $i$-th characteristic matrix of $C$ ($0 \le i \le s-1$).
Define $F_i=\frac{1}{|C|}H_i H_i^\top$ ($0 \le i \le s-1$).
Set $F_s=I-\sum_{j=0}^{s-1}F_j$.
Then $F_i=\frac{1}{|C|}\sum_{j=0}^s K_{n,q,i}(\alpha_j)A_j$ ($0 \le i \le s-1$)
by~\cite[Theorem~3.13]{D}
and $F_s = \frac{1}{|C|}\sum_{j=0}^s$ $f(\alpha_j)A_j$
where $f(z)=|C|\prod_{i=1}^s (1-z/\alpha_i)-\sum_{j=0}^{s-1} K_{n,q,j}(z)$.
Then $F_i\neq O$ and $F_i\in \mathcal{A}$ for each $i$.

By Lemma~\ref{lem:cha},
we have $F_i F_j=\delta_{ij}F_i$ ($0 \le i,j \le s-1$),
from which it follows that $F_iF_s=F_sF_i=O$ ($0 \le i \le s-1$)
and $F_s^2=F_s$.
These show that $\{F_0,F_1,\ldots,F_s\}$ form a set of mutually orthogonal idempotents of $\mathcal{A}$.
Therefore $\mathcal{A}$ is closed under matrix multiplication
and the pair $(C,\{S_i\}_{i=0}^s)$ is an association scheme.
Note that $F_i$ is written as a polynomial of degree $i$ in $F_1$
with respect to the entrywise product.
Therefore the scheme is $Q$-polynomial.
\end{proof}

%%%%%%%%%%%%%%%%%%%%%%%%%%%%%%%%%%%%%%%%%%%%%%%%%%%%%%%%%%%%%%%%%%%%%%%%%%%%%
\section{Tight $4$-designs in $H(n,q)$ and $Q$-antipodal association schemes of $4$ classes}\label{sect:4class}
Let $C$ be a tight $4$-design in $H(n,q)$ with degree set $S(C)=\{\alpha_1,\alpha_2\}$ where $\alpha_1,\alpha_2$ ($\alpha_1<\alpha_2$) are the zeros of $\sum_{j=0}^{2}K_{n,q,j}(x)=0$.
Set $\alpha_0=0$, and define $S_i=\{(x,y)\in C\times C\mid d(x,y)=\alpha_i\}$ for each $i$.
By Theorem~\ref{thm:t2s-2},  the pair $(C,\{S_i\}_{i=0}^2)$ is an association scheme of $2$ classes.
In this section, we decompose $S_1$ and $S_2$ into two subsets
so that a tight $4$-design in $H(n,q)$
yields a $Q$-anti\-podal association scheme of $4$ classes.

Define $C_i$ to be
\[
C_i=\{(x_2,\ldots,x_n)\mid (i,x_2,\ldots,x_n)\in C\}
\quad (1 \le i \le q).
\]
Then $C=\bigcup_{i=1}^q \{i\}\times C_i$ holds.
Note that $C_i$ is obtained from $C$ by deleting the first coordinate of the vectors with $x_1=i$ in $C$ and $|C_i|=|C|/q$ for each $i$.
Setting $\tilde{C}=\bigcup_{i=1}^{q}C_i$, we will consider further combinatorial structure on $\tilde{C}$ based on its partition $\tilde{C}=\bigcup_{i=1}^{q}C_i$.

Denote by $H_k^{(i)}$ the $k$-th characteristic matrix of $C_i$ in $H(n-1,q)$,
and observe that $C_i$ is a $3$-design with degree $2$ in $H(n-1,q)$.
First we claim the following lemma, which is crucial to construct an association scheme on $\tilde{C}$.

\begin{lemma}\label{lem:F}
Let $C$ be a tight $4$-design in $H(n,q)$.
Define $F_\ell^{(i,j)}$ to be
\begin{align*}
F_\ell^{(i,j)}=\frac{1}{\sqrt{|C_i||C_j|}}H_\ell^{(i)}(H_\ell^{(j)})^\top
\quad (1 \le i,j \le q, \ \ell\in\{0,1\})
\end{align*}
and
\begin{align*}
F_2^{(i,i)} = I-F_0^{(i,i)}-F_1^{(i,i)} \quad (1 \le i \le q).
\end{align*}
Then
$F_\ell^{(i,j)}F_{\ell'}^{(j,k)} = \delta_{\ell\ell'}F_\ell^{(i,k)}$
holds for $1 \le i,j,k \le q$ and $\ell,\ell'\in\{0,1\}$,
and $F_{2}^{(i,i)}F_{\ell}^{(i,j)}$ $= F_{\ell}^{(i,j)}F_{2}^{(j,j)}$ $= O$
holds for $1 \le i,j \le q$ and $\ell\in\{0,1\}$.
\end{lemma}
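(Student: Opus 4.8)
The plan is to reduce everything to orthogonality relations among the characteristic matrices $H_\ell^{(i)}$ of the pieces $C_i$ inside $H(n-1,q)$, and then use the fact (noted before the lemma) that each $C_i$ is a $3$-design of degree $2$ in $H(n-1,q)$. First I would record the key orthogonality: by Lemma~\ref{lem:cha} applied to a single $3$-design $C_i$ in $H(n-1,q)$, one has $(H_k^{(i)})^\top H_\ell^{(i)} = \delta_{k\ell}|C_i| I$ for all $k+\ell \le 3$, in particular for $k,\ell \in \{0,1\}$. The deeper input is a \emph{cross}-orthogonality between $C_i$ and $C_j$ for $i \ne j$: since $C$ is a $4$-design in $H(n,q)$, the union $\bigcup_i \{i\}\times C_i$ satisfies $\chi^\top E_m \chi = 0$ for $1\le m\le 4$, and writing this out in terms of the Hamming-scheme eigenmatrix (Krawtchouk polynomials) and splitting the sum according to the first coordinate should yield, after the standard Delsarte computation, the identity $H_\ell^{(i)}(H_\ell^{(j)})^\top$ behaving like a ``partial resolution of identity'' — concretely, I expect $\sum_{i,j} \frac{1}{\sqrt{|C_i||C_j|}} H_\ell^{(i)}(H_\ell^{(j)})^\top$ to be a genuine idempotent, while the individual blocks multiply across the middle index in the claimed way.

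Granting those orthogonality relations, the multiplicative identities are then a direct block-matrix computation. For $\ell,\ell'\in\{0,1\}$,
\[
F_\ell^{(i,j)}F_{\ell'}^{(j,k)}
= \frac{1}{\sqrt{|C_i||C_j|}\sqrt{|C_j||C_k|}}\,
  H_\ell^{(i)}\bigl((H_\ell^{(j)})^\top H_{\ell'}^{(j)}\bigr)(H_{\ell'}^{(k)})^\top,
\]
and the inner factor $(H_\ell^{(j)})^\top H_{\ell'}^{(j)} = \delta_{\ell\ell'}|C_j| I$ by the single-design orthogonality above; this collapses the expression to $\delta_{\ell\ell'}\,\frac{|C_j|}{|C_j|}\cdot\frac{1}{\sqrt{|C_i||C_k|}}H_\ell^{(i)}(H_\ell^{(k)})^\top = \delta_{\ell\ell'}F_\ell^{(i,k)}$, which is exactly what we want. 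For the statements involving $F_2^{(i,i)}$, I would use $F_2^{(i,i)} = I - F_0^{(i,i)} - F_1^{(i,i)}$ and the already-established relations $F_\ell^{(i,i)}F_{\ell'}^{(i,j)} = \delta_{\ell\ell'}F_\ell^{(i,j)}$ (a special case of the above with the first index repeated): then $F_2^{(i,i)}F_\ell^{(i,j)} = F_\ell^{(i,j)} - F_\ell^{(i,j)} - 0 = O$ for $\ell\in\{0,1\}$, and symmetrically $F_\ell^{(i,j)}F_2^{(j,j)} = O$ using $F_\ell^{(i,j)}F_{\ell'}^{(j,j)} = \delta_{\ell\ell'}F_\ell^{(i,j)}$.

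The main obstacle is establishing the cross-orthogonality $(H_0^{(j)})^\top H_1^{(i)}$-type relations and, more precisely, confirming that the single-design orthogonality $(H_k^{(i)})^\top H_\ell^{(i)} = \delta_{k\ell}|C_i|I$ is all that is actually needed — i.e.\ that no genuinely ``mixed'' relation between distinct $C_i$ and $C_j$ is required for the four displayed identities, only within each $C_j$ at the middle index. Re-reading the computation above, the middle index $j$ is always repeated, so in fact only Lemma~\ref{lem:cha} applied to the $3$-design $C_j$ in $H(n-1,q)$ (valid since $0+1\le 3$) is used; the role of $C$ being a $4$-design is precisely to guarantee, via the deletion argument, that each $C_i$ is indeed a $3$-design in $H(n-1,q)$, which is the observation recorded just before the lemma. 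Thus the proof is: invoke that observation, apply Lemma~\ref{lem:cha} to each $C_j$ to get $(H_\ell^{(j)})^\top H_{\ell'}^{(j)} = \delta_{\ell\ell'}|C_j|I$, and then carry out the two short block computations displayed above.
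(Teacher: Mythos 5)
Your proof is correct and matches the paper's: the paper's entire proof is ``By Lemma~\ref{lem:cha}'', i.e.\ exactly the observation that each $C_j$ is a $3$-design in $H(n-1,q)$, so $(H_\ell^{(j)})^\top H_{\ell'}^{(j)}=\delta_{\ell\ell'}|C_j|I$ for $\ell,\ell'\in\{0,1\}$, which collapses the products as in your block computation (and, as you note, no cross-orthogonality between distinct $C_i$ and $C_j$ is needed since the middle index is always repeated).
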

\begin{proof}
By Lemma~\ref{lem:cha}.
\end{proof}

\begin{comment}

\begin{theorem}\label{thm:cc}
Let $C$ be a tight $4$-design in $H(n,q)$.
Then $C$ carries a coherent configuration with the fibers $C_1,\ldots,C_q$.
\end{theorem}
\begin{proof}
The degree between $C_i$ and $C_j$ is $s_{i,j}=2$, and each $C_i$ is a $3$-design in $H(n-1,q)$.
Let $A_{i,j}$ be the vector space spanned by the adjacency matrices between the fibers $C_i$ and $C_j$.
The set of matrices
\begin{itemize}
\item $\{F_{\ell}^{(i,j)} \mid \ell\in\{0,1,2\} \}$ if $i=j$,
\item $\{F_{\ell}^{(i,j)} \mid \ell\in\{0,1\} \}$ if $i\neq j$
\end{itemize}
forms a basis of $\mathcal{A}_{i,j}$.
By Lemma~\ref{lem:F},  $\mathcal{A}_{i,j}\mathcal{A}_{j,k}\subseteq \mathcal{A}_{i,k}$ holds for any $i,j,k$.

Thus the $C$ carries a coherent configuration with the fibers $C_1,\ldots,C_q$.
\end{proof}
The coherent configuration in Theorem~\ref{thm:cc} is uniform
in the sense of~\cite{DMM}.
Thus it yields an imprimitive association scheme.
Though this fact follows from the general result in~\cite{},
we give another proof by constructing primitive idempotents directly.
\end{comment}

Recall $\tilde{C}=\bigcup_{i=1}^{q}C_i$.
Then $\tilde{C}$ is a subset in $H(n-1,q)$ and $S(\tilde{C})=\{\alpha_1,\alpha_2,\alpha_1-1,\alpha_2-1\}$.
Define $\tilde{S}_0,\tilde{S}_1,\ldots,\tilde{S}_4$ by $\tilde{S}_0=\{(x,y)\in\tilde{C}\times \tilde{C}\mid d(x,y)=0\}$ and
\begin{align*}
\tilde{S}_{2i-1}&=\{(x,y)\in\tilde{C}\times \tilde{C}\mid d(x,y)=\alpha_i-1\},\\
\tilde{S}_{2i}&=\{(x,y)\in\tilde{C}\times \tilde{C}\mid d(x,y)=\alpha_i\}
\end{align*}
for $i\in\{1,2\}$.
The following theorem is the main theorem in this section.
\begin{theorem}\label{thm:qant4}
Let $C$ be a tight $4$-design in $H(n,q)$.
Then $(\tilde{C},\{\tilde{S}_i\}_{i=0}^4)$
is a $Q$-antipodal association scheme of $4$ classes
with Krein array
\[
\{(n-1)(q-1), (n-2)(q-1), 2(q-1), 1; 1, 2, (n-2)(q-1), (n-1)(q-1)\} .
\]
\end{theorem}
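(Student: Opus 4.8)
The plan is to produce the five primitive idempotents of the desired scheme directly from the matrices of Lemma~\ref{lem:F}, to check that the adjacency matrices of $\tilde{S}_0,\dots,\tilde{S}_4$ lie in their span, and then to read off the eigenmatrix and the Krein array. First I would assemble the blocks: writing $\tilde{C}$ as the ordered disjoint union $C_1\cup\dots\cup C_q$, for $\ell\in\{0,1\}$ let $\mathcal{F}_\ell$ be the $|\tilde{C}|\times|\tilde{C}|$ matrix whose $(i,j)$-block is $F_\ell^{(i,j)}$, and for $\ell\in\{0,1,2\}$ let $\mathcal{G}_\ell=\bigoplus_{i=1}^{q}F_\ell^{(i,i)}$. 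By Theorem~\ref{thm:t2s-2} applied to each $C_i$ (a $3$-design of degree $2$ in $H(n-1,q)$) the $\mathcal{G}_\ell$ are idempotents with $\mathcal{G}_0+\mathcal{G}_1+\mathcal{G}_2=I$, and Lemma~\ref{lem:F} gives $\mathcal{F}_\ell\mathcal{F}_{\ell'}=q\delta_{\ell\ell'}\mathcal{F}_\ell$ and $\mathcal{F}_\ell\mathcal{G}_{\ell'}=\mathcal{G}_{\ell'}\mathcal{F}_\ell=\delta_{\ell\ell'}\mathcal{F}_\ell$ for $\ell,\ell'\in\{0,1\}$, and $\mathcal{G}_2\mathcal{F}_\ell=\mathcal{F}_\ell\mathcal{G}_2=O$ for $\ell\in\{0,1\}$. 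Setting
\[
E_0=\tfrac1q\mathcal{F}_0,\quad E_1=\tfrac1q\mathcal{F}_1,\quad E_2=\mathcal{G}_2,\quad E_3=\mathcal{G}_1-\tfrac1q\mathcal{F}_1,\quad E_4=\mathcal{G}_0-\tfrac1q\mathcal{F}_0,
\]
these relations show by direct computation that $E_0,\dots,E_4$ are mutually orthogonal idempotents with $\sum_k E_k=I$. Since the $0$-th characteristic matrix is the all-ones vector and $|C_i|=|C|/q$, one gets $\mathcal{F}_0=\tfrac{q}{|C|}J$, hence $E_0=\tfrac1{|\tilde{C}|}J$; stacking the $H_1^{(i)}$ and invoking Lemma~\ref{lem:cha} shows $E_1$ has rank $(n-1)(q-1)$, and a short count then gives the ranks of $E_2,E_3,E_4$ as $\binom{n-1}{2}(q-1)^2$, $(n-1)(q-1)^2$ and $q-1$, which are precisely the multiplicities forced, in this order, by the claimed Krein array.

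Next I would show that the adjacency matrices $\tilde{A}_0,\dots,\tilde{A}_4$ of $\tilde{S}_0,\dots,\tilde{S}_4$ all lie in $\mathcal{W}:=\operatorname{span}\{E_0,\dots,E_4\}$. Within a block the distances are $0,\alpha_1,\alpha_2$, and by the formula in the proof of Theorem~\ref{thm:t2s-2} the distance-$\alpha_m$ matrix of $C_i$ is a linear combination---with coefficients independent of $i$---of $F_0^{(i,i)},F_1^{(i,i)},F_2^{(i,i)}$; summing over $i$ places $\tilde{A}_2$ and $\tilde{A}_4$ in $\operatorname{span}\{\mathcal{G}_0,\mathcal{G}_1,\mathcal{G}_2\}\subseteq\mathcal{W}$. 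Between two distinct blocks the distances are $\alpha_1-1,\alpha_2-1$; since $G_kG_k^\top=\sum_d K_{n-1,q,k}(d)A_d$ in $H(n-1,q)$, the $(x,y)$-entry of $H_k^{(i)}(H_k^{(j)})^\top$ equals $K_{n-1,q,k}(d(x,y))$, so the cases $k=0,1$ give a $2\times 2$ system---nonsingular because $K_{n-1,q,1}(\alpha_1-1)\neq K_{n-1,q,1}(\alpha_2-1)$---expressing each distance-$(\alpha_m-1)$ indicator between $C_i$ and $C_j$ as a fixed linear combination of $F_0^{(i,j)}$ and $F_1^{(i,j)}$; summing over $i\neq j$ places $\tilde{A}_1$ and $\tilde{A}_3$ in $\operatorname{span}\{\mathcal{F}_0-\mathcal{G}_0,\mathcal{F}_1-\mathcal{G}_1\}\subseteq\mathcal{W}$. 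Thus $\tilde{A}_0=I,\tilde{A}_1,\dots,\tilde{A}_4\in\mathcal{W}$; being five linearly independent symmetric $(0,1)$-matrices with pairwise disjoint supports and $\sum_m\tilde{A}_m=J$, they form a basis of the $5$-dimensional algebra $\mathcal{W}$, which is therefore closed under matrix and entrywise products and under transposition and contains $I$ and $J$. Hence $(\tilde{C},\{\tilde{S}_m\}_{m=0}^4)$ is a symmetric association scheme of $4$ classes, and expanding the $\tilde{A}_m$ in the basis $\{E_k\}$ records its first eigenmatrix $P$.

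It then remains to identify the $Q$-polynomial structure and the Krein array. Inverting $P$ yields the second eigenmatrix $Q$; one checks that in the order $E_0,\dots,E_4$ the entries $Q_{m1}=K_{n-1,q,1}(\delta_m)$, with $\delta_m$ ranging over $0,\alpha_1-1,\alpha_1,\alpha_2-1,\alpha_2$, are pairwise distinct, and that each column $Q_{\cdot k}$ is a polynomial of degree $k$ in $Q_{\cdot 1}$---equivalently, that $L_1^*=(q_{1j}^k)$ is irreducible tridiagonal. Computing $b_i^*=q_{1,i+1}^i$ and $c_i^*=q_{1,i-1}^i$ and simplifying with the three-term recurrence for Krawtchouk polynomials together with the relations expressing $\alpha_1+\alpha_2$ and $\alpha_1\alpha_2$ in terms of $n,q$ coming from Theorem~\ref{thm:wilson} produces the array $\{(n-1)(q-1),(n-2)(q-1),2(q-1),1;1,2,(n-2)(q-1),(n-1)(q-1)\}$. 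Finally $b_0^*=c_4^*$, $b_1^*=c_3^*$ and $b_3^*=c_1^*$, i.e.\ $b_i^*=c_{4-i}^*$ for all $i\neq 2$, so the scheme is $Q$-antipodal.

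I expect the last step to be the real obstacle: extracting $Q$ from $P$ and reducing the Krein parameters to the stated closed form requires carrying several Krawtchouk evaluations at $\alpha_1,\alpha_2,\alpha_1-1,\alpha_2-1$ and repeatedly using the identities relating $n,q,\alpha_1,\alpha_2$; everything before it is essentially bookkeeping with Lemma~\ref{lem:F} and the characteristic matrices.
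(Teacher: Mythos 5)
Your proposal is correct and takes essentially the same route as the paper: the matrices $E_0=\tfrac1q\mathcal{F}_0$, $E_1=\tfrac1q\mathcal{F}_1$, $E_2=\mathcal{G}_2$, $E_3=\mathcal{G}_1-\tfrac1q\mathcal{F}_1$, $E_4=\mathcal{G}_0-\tfrac1q\mathcal{F}_0$ are exactly the block idempotents constructed in the paper's proof, their orthogonality is obtained from Lemma~\ref{lem:F} in the same way, and the Krein array and $Q$-antipodality are read off from the resulting second eigenmatrix just as the paper does by displaying $Q$ and $L_1^*$. The only differences are cosmetic: you run the span argument in the direction $\tilde{A}_m\in\operatorname{span}\{E_k\}$ (adding explicit Krawtchouk bookkeeping and the multiplicity counts), whereas the paper notes $E_k\in\operatorname{span}\{\tilde{A}_m\}$, and both treatments leave the final eigenmatrix/Krein-parameter computation as a routine calculation.
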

\begin{proof}
Let $A_i$ be the adjacency matrix of the graph $(\tilde{C},\tilde{S}_i)$
($0 \le i \le 4$),
and let $\mathcal{A}$ be the vector space
spanned by $A_0,A_1,\ldots,A_4$ over $\mathbb{R}$.

Since each $C_i$ is a $3$-design with degree $2$ in $H(n-1,q)$, $C_i$ provides an association scheme of $2$ classes by Theorem~\ref{thm:t2s-2}.
It follows from the proof of Theorem~\ref{thm:t2s-2} that the primitive idempotents of $C_i$ are $F_{0}^{(i,i)},F_{1}^{(i,i)},F_{2}^{(i,i)}:=I-F_{0}^{(i,i)}-F_{1}^{(i,i)}$, where $F_{\ell}^{(i,i)}=\frac{1}{|C_i|}H_\ell^{(i)}(H_\ell^{(i)})^\top$ for $\ell\in\{0,1\}$.

Now we define $E_0,E_1,\ldots,E_4$ as
\begin{align*}
E_i&=\frac{1}{q}\begin{pmatrix}
F_{i}^{(1,1)} & F_{i}^{(1,2)} & \cdots & F_{i}^{(1,q)} \\
F_{i}^{(2,1)} & F_{i}^{(2,2)} & \cdots & F_{i}^{(2,q)} \\
\vdots & \vdots & \ddots & \vdots \\
F_{i}^{(q,1)} & F_{i}^{(q,2)} & \cdots & F_{i}^{(q,q)}
\end{pmatrix} \text{ for } i\in\{0,1\},\\
E_2&=\phantom{\frac{q}{q}}
\begin{pmatrix}
F_{2}^{(1,1)} & O & \cdots & O \\
O & F_{2}^{(2,2)} & \cdots & O \\
\vdots & \vdots & \ddots & \vdots \\
O & O & \cdots & F_{2}^{(q,q)}
\end{pmatrix},\\
E_{4-i}&=\frac{1}{q}\begin{pmatrix}
(q-1)F_{i}^{(1,1)} & -F_{i}^{(1,2)} & \cdots & -F_{i}^{(1,q)} \\
-F_{i}^{(2,1)} & (q-1)F_{i}^{(2,2)} & \cdots & -F_{i}^{(2,q)} \\
\vdots & \vdots & \ddots & \vdots \\
-F_{i}^{(q,1)} & -F_{i}^{(q,2)} & \cdots & (q-1)F_{i}^{(q,q)}
\end{pmatrix} \text{ for } i\in\{0,1\}.
\end{align*}
Note that each $E_i$ is a non-zero matrix.
Since the matrices
\begin{align*}
\begin{pmatrix}
F_{i}^{(1,1)} & O & \cdots & O \\
O & F_{i}^{(2,2)} & \cdots & O \\
\vdots & \vdots & \ddots & \vdots \\
O & O & \cdots & F_{i}^{(q,q)}
\end{pmatrix},\quad
\begin{pmatrix}
O & F_{i}^{(1,2)} & \cdots & F_{i}^{(1,q)} \\
F_{i}^{(2,1)} & O & \cdots & F_{i}^{(2,q)} \\
\vdots & \vdots & \ddots & \vdots \\
F_{i}^{(q,1)} & F_{i}^{(q,2)} & \cdots & O
\end{pmatrix}
\end{align*}
are written as a linear combinations of $A_0,A_1,\ldots,A_4$, so are the matrices $E_0,E_1,\ldots,E_4$.
From Lemma~\ref{lem:F}, it follows that $E_0,E_1,\ldots,E_4$ are mutually orthogonal idempotents.
Thus $\mathcal{A}$ is closed under the matrix multiplication, and
$(\tilde{C},\{\tilde{S}_i\}_{i=0}^4)$ is an association scheme of $4$ classes with the primitive idempotents $E_0,E_1,\ldots,E_4$.
The second eigenmatrix $Q$ is given as:
\begin{align*}
Q=\begin{pmatrix}
 1 & (n-1)(q-1) & \frac{1}{2} (n^2\mm 3n\pp 2)(q\mm 1)^2 & (n-1)(q-1)^2 & q\mm 1 \\
 1 & \frac{1}{2}(q-2+d) & 0 & -\frac{1}{2}(q-2+d) & -1 \\
 1 & \frac{1}{2}(-q\mm 2\pp d) & \frac{1}{2}q(q-d)  & \frac{1}{2}(q\mm 1)(-q\mm 2\pp d) & q\mm 1 \\
 1 & \frac{1}{2}(q-2-d) & 0 & -\frac{1}{2}(q-2-d) & -1 \\
 1 & \frac{1}{2}(-q\mm 2\mm d) & \frac{1}{2}q(q+d) & \frac{1}{2}(q\mm 1)(-q\mm 2\mm d) & q\mm 1
\end{pmatrix},
\end{align*}
where $d=\sqrt{q^2 + 4(n-2)(q-1)}$.
Then the matrix $L_1^*$ is
\begin{align*}
L_1^*=\left(
\begin{array}{ccccc}
 0 & (n-1) (q-1) & 0 & 0 & 0 \\
 1 & q-2 & (n-2) (q-1) & 0 & 0 \\
 0 & 2 & n (q-1)-3 q+1 & 2 (q-1) & 0 \\
 0 & 0 & (n-2) (q-1) & q-2 & 1 \\
 0 & 0 & 0 & (n-1) (q-1) & 0 \\
\end{array}
\right).
\end{align*}
Therefore the scheme is a $Q$-antipodal scheme
with the given Krein array.
\end{proof}

\begin{remark}
The association scheme $(\tilde{C},\{\tilde{S}_i\}_{i=0}^4)$
is a fission scheme of $(C,$ $\{S_i\}_{i=0}^2)$ in the following way.
Let $\phi$ be a mapping from $C$ to $\tilde{C}$ defined by $\phi(x_1,x_2,\ldots,x_n)=(x_2,\ldots,x_n)$ and extended from $C\times C$ to $\tilde{C}\times \tilde{C}$ with respect to entrywise.
Then $\phi(S_0) = \tilde{S}_0$
and $\phi(S_i) = \tilde{S}_{2i-1}\cup \tilde{S}_{2i}$ for $i=1,2$ hold.
\end{remark}

\begin{example}\label{H52}
There exists a unique tight $4$-design in $H(5,2)$.
It is the dual code of the repetition code of length $5$.
By Theorem~\ref{thm:qant4},
it yields a $Q$-antipodal association scheme of $4$ classes
with Krein array $\{4, 3, 2, 1; 1, 2, 3, 4\}$
(i.e., the Hamming association scheme $H(4, 2)$).

\end{example}

\begin{example}\label{H113}
There exists a unique tight $4$-design in $H(11,3)$, namely the dual code of ternary Golay code.
By Theorem~\ref{thm:qant4},
it yields a $Q$-antipodal association scheme of $4$ classes
with Krein array $\{20, 18, 4, 1;$ $1, 2, 18, 20\}$.
\end{example}
%%%%%%%%%%%%%%%%%%%%%%%%%%%%%%%%%%%%%%%%%%%%%%%%%%%%%%%%%%%%%%%%%%%%%%%%%%%%%
\section{Triple intersection numbers
of a $Q$-antipodal association scheme of $4$ classes}\label{sect:Triple}
In this section we calculate triple intersection numbers of a $Q$-antipodal association scheme of $4$ classes obtained from a tight $4$-design in  $H((9a^2+1)/5,6)$ where $a$ is a positive integer such that $a\equiv 0\pmod{3}$, $a\equiv \pm1\pmod{5}$ and $a\equiv 5\pmod{16}$.

Let $C$ be a tight $4$-design in $H((9a^2+1)/5,6)$.
The corresponding association scheme $(\tilde{C},$ $\{\tilde{S}_i\}_{i=0}^4)$
has Krein array $\{9a^2-4, 9a^2-9, 10, 1; 1, 2, 9a^2-9, 9a^2-4\}$.
By substituting $3a = r$, we get the Krein array
$\{r^2-4, r^2-9, 10, 1; 1, 2, r^2-9, r^2-4\}$.
This parameter set is feasible for all odd $r \ge 5$
(i.e., the intersection numbers and multiplicities are nonnegative integers,
and the Krein parameters are nonnegative real numbers).

An association scheme with such parameters
has $r^2 (r^2 - 1)/2$ vertices and is $Q$-antipodal,
so many of its Krein parameters are zero.
For a chosen triple of vertices of the association scheme,
this allows us to augment the system of equations \eqref{eqn:triple}
with new equations derived from Theorem~\ref{thm:krein0}.
We used the {\tt sage-drg} package~\cite{Vdrg} (see also~\cite{V})
for the SageMath computer algebra system~\cite{Sage}
to derive the following result.

\begin{theorem} \label{thm:nonex}
Let $(X, \{R_i\}_{i=0}^4)$ be a $Q$-polynomial association scheme
with Krein array $\{r^2-4, r^2-9, 10, 1; 1, 2, r^2-9, r^2-4\}$.
Then $r = 9$.
\end{theorem}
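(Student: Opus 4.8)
The plan is to exploit the $Q$-antipodal structure: for such a scheme a large proportion of the Krein parameters vanish, and Theorem~\ref{thm:krein0} converts each vanishing Krein parameter into a linear equation on the triple intersection numbers, turning the otherwise underdetermined system~\eqref{eqn:triple} into an over-determined one from which an obstruction on $r$ can be extracted.

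First I would record the numerical data of such a scheme. From the Krein array $\{r^2-4,r^2-9,10,1;\,1,2,r^2-9,r^2-4\}$ one computes the second eigenmatrix $Q$, the multiplicities $m_i$, the valencies $k_i$ of the relations, the intersection numbers $p_{ij}^k$, and all Krein parameters $q_{ij}^k$, each as an explicit expression in $r$; the number of vertices is $r^2(r^2-1)/2$. By the discussion preceding the statement these data are feasible precisely when $r$ is an odd integer with $r\ge5$, so I would assume this from now on. The essential input is the list of vanishing $q_{ij}^k$: since the scheme is $Q$-polynomial of class $4$ we have $q_{ij}^k=0$ whenever $(i,j,k)$ violates the triangle inequality, and since it is moreover $Q$-antipodal we also have $q_{ij}^k=0$ whenever $i+j+k>8$ and $(4-i,4-j,4-k)$ violates the triangle inequality. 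This is a long list, and it is what makes the approach work.

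Next I would fix a triple of vertices $(u,v,w)$ with a prescribed pattern of mutual relations, say $(v,w)\in R_U$, $(u,w)\in R_V$, $(u,v)\in R_W$, chosen among the patterns that actually occur (those with the relevant intersection number positive). For this triple I would set up the $3\cdot4^{2}$ equations~\eqref{eqn:triple} together with the boundary values, and then adjoin, for each vanishing Krein parameter $q_{ij}^k$, the linear relation $\sum_{a,b,c=0}^{4}Q_{ai}Q_{bj}Q_{ck}\,[a\ b\ c]=0$ furnished by Theorem~\ref{thm:krein0}. The result is a linear system over $\mathbb{Q}(r)$ in the $4^{3}$ unknowns $[a\ b\ c]$. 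Solving it is the computational core of the proof, and is precisely what the {\tt sage-drg} package is used for: for a well-chosen pattern $(U,V,W)$ the adjoined equations cut the solution space down to a single point (or at worst a low-dimensional family), so that the triple intersection numbers of interest are pinned to explicit rational functions of $r$.

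Finally, every $[a\ b\ c]$ has to be a nonnegative integer for any realization of the triple. Comparing the rational functions obtained above against this constraint, one of them --- after clearing denominators --- fails to be an integer for every admissible $r$ except $r=9$ (or, alternatively, nonnegativity already breaks down for all large $r$ while the finitely many small odd values are checked by hand), and combining the data from one or two such triples leaves $r=9$ as the only possibility. I expect the main difficulty to lie in the choice of the triple(s): the plain system~\eqref{eqn:triple} is far from determined, so the argument genuinely depends on having enough vanishing Krein parameters, and locating a pattern $(U,V,W)$ for which the augmented system is tight enough to produce the contradiction is the delicate step. The accompanying symbolic linear algebra over $\mathbb{Q}(r)$ is sizeable --- hence the use of a computer --- whereas the concluding deduction is an elementary divisibility (and nonnegativity) check.
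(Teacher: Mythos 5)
Your proposal follows essentially the same route as the paper: the authors take a triple $(u,v,w)$ with all three pairs in $R_1$ (possible since $p_{11}^1>0$), augment the system \eqref{eqn:triple} with the equations from Theorem~\ref{thm:krein0} for the vanishing Krein parameters of the $Q$-antipodal scheme, solve symbolically with {\tt sage-drg}, and obtain a one-parameter family in which $[1\ 1\ 1]+3[1\ 2\ 3]$ contains the term $9/r$, so integrality forces $r\mid 9$ and hence $r=9$. This matches your plan, including your allowance for a low-dimensional (rather than unique) solution family and the concluding divisibility check.
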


\begin{proof}
Since the Krein array above
is obtained from the Krein array in Theorem~\ref{thm:qant4}
by setting $n = (r^2+1)/5$, $q = 6$,
we may write the corresponding second eigenmatrix as
\[
Q = \left(
\begin{array}{ccccc}
 1 & r^2-4 & \frac{1}{2} (r^2-4)(r^2-9) & 5 (r^2-4) & 5 \\
 1 & r+2 & 0 & -r-2 & -1 \\
 1 & r-4 & -6 (r-3) & 5 (r-4) & 5 \\
 1 & -r+2 & 0 & r-2 & -1 \\
 1 & -r-4 & 6 (r+3) & -5 (r+4) & 5 \\
\end{array}
\right) .
\]

As noted above, $r$ must be odd and at least $5$
for the intersection numbers $p_{ij}^k$ ($0 \le i, j, k \le 4$)
to be all nonnegative and integral.
In particular,
we have $p_{11}^1 = (r^2 - 3r + 6)(r^2 - 1)/12 > 0$ for all such $r$,
so we can choose $u, v, w \in X$ such that $(u, v), (u, w), (v, w) \in R_1$.

Solving the system of equations \eqref{eqn:triple} for the triple $(u, v, w)$
augmented by equations derived from Theorem~\ref{thm:krein0}
for each zero Krein parameter yields a one-parametrical solution
(see the notebook
\href{https://nbviewer.jupyter.org/github/jaanos/sage-drg/blob/master/jupyter/QPoly-d4-tight4design.ipynb}{\tt QPoly-d4-tight4design.ipynb}
on the {\tt sage-drg} package repository for computation details).
Let $\alpha = [1\ 2\ 3]$, and write $r = 2t+1$.
Then we may express
\[
[1\ 1\ 1] = t^4 + 2t^3 + 2t^2 - 3\alpha - {5r + 4 - 9/r \over 8} .
\]
Clearly, this expression can only be integral when $r$ divides $9$.
Since we must have $r \ge 5$,
this leaves $r = 9$ as the only feasible solution.
\end{proof}

\begin{corollary} \label{cor:nonex}
A tight $4$-design as in Theorem~\ref{thm:coa}(3) does not exist.
\end{corollary}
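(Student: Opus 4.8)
The plan is to deduce Corollary~\ref{cor:nonex} directly from Theorems~\ref{thm:qant4} and~\ref{thm:nonex}, together with the arithmetic constraints on $a$ recorded in Theorem~\ref{thm:coa}(3). I would argue by contradiction, assuming that a tight $4$-design $C$ exists with $(|C|, n, q) = (9a^2(9a^2-1)/2,\ (9a^2+1)/5,\ 6)$ for some positive integer $a$ with $a \equiv 0 \pmod 3$, $a \equiv \pm 1 \pmod 5$ and $a \equiv 5 \pmod{16}$.

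The first step is to apply Theorem~\ref{thm:qant4} to $C$, obtaining a $Q$-antipodal association scheme of $4$ classes on $\tilde{C}$ with Krein array $\{(n-1)(q-1),\ (n-2)(q-1),\ 2(q-1),\ 1;\ 1,\ 2,\ (n-2)(q-1),\ (n-1)(q-1)\}$. Setting $q = 6$ gives $q - 1 = 5$, and since $n = (9a^2+1)/5$ one has $5(n-1) = 9a^2 - 4$ and $5(n-2) = 9a^2 - 9$, so the Krein array becomes $\{9a^2-4,\ 9a^2-9,\ 10,\ 1;\ 1,\ 2,\ 9a^2-9,\ 9a^2-4\}$. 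Writing $r = 3a$, this is precisely $\{r^2-4,\ r^2-9,\ 10,\ 1;\ 1,\ 2,\ r^2-9,\ r^2-4\}$, matching the hypothesis of Theorem~\ref{thm:nonex}.

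Theorem~\ref{thm:nonex} then forces $r = 9$, i.e.\ $a = 3$. But $a = 3$ violates $a \equiv 5 \pmod{16}$ (and already $a \equiv \pm 1 \pmod 5$), a contradiction; hence no tight $4$-design as in Theorem~\ref{thm:coa}(3) exists.

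All of the substantive work lies in Theorems~\ref{thm:qant4} and~\ref{thm:nonex}, so there is no genuine obstacle remaining. The only points requiring attention are the bookkeeping of the substitutions $q = 6$, $n = (9a^2+1)/5$, $r = 3a$ so that the two Krein arrays coincide, and the observation that the forced value $r = 9$ (equivalently $a = 3$) is excluded by the congruence conditions on $a$ --- indeed $a \equiv 5 \pmod{16}$ already forces $a \ge 5$, hence $r = 3a \ge 15$, contradicting $r = 9$.
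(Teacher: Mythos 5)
Your proposal is correct and follows essentially the same route as the paper: apply Theorem~\ref{thm:qant4}, match the Krein array to Theorem~\ref{thm:nonex} via $q=6$, $n=(9a^2+1)/5$, $r=3a$, and rule out the forced value $a=3$ by the congruence conditions (the paper invokes $a\equiv\pm1\pmod 5$, you additionally use $a\equiv 5\pmod{16}$, which works equally well).
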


\begin{proof}
Let $(\tilde{C},\{\tilde{S}_i\}_{i=0}^4)$ be the association scheme
corresponding to a tight $4$-design in $H((9a^2+1)/5,6)$.
By Theorem~\ref{thm:qant4},
its Krein array matches that of Theorem~\ref{thm:nonex} with $r = 3a$,
from which $a = 3$ follows.
But this fails the condition $a \equiv \pm 1 \pmod{5}$,
so such a design cannot exist.
\end{proof}

Theorem~\ref{thm:nonex} allows for the existence
of a $Q$-polynomial association scheme
with Krein array $\{77, 72, 10, 1; 1, 2, 72, 77\}$.
No such scheme is known,
however such a scheme would have as a subscheme
a strongly regular graph
(i.e., an association scheme of $2$ classes)
with parameters $(v, k, \lambda, \mu) = (540, 154, 28, 50)$.
This parameter set is also feasible,
but no example is known, see~\cite{Bsrg}.

\Acknowledgements

\end{document}